\documentclass[a4paper, english,reqno]{amsart}
\usepackage{amsmath} 
\usepackage{amsthm} 
\usepackage{amssymb} 

\usepackage{amscd} 

\usepackage{array} 
\usepackage[backref=page,linktocpage]{hyperref} 
\usepackage{caption} %
\usepackage{graphics,graphicx} 
\usepackage{tikz,tikz-cd} 



\usepackage{enumerate} 

\usepackage{xcolor}

\makeatletter
\@namedef{subjclassname@2020}{\textup{2020} Mathematics Subject Classification}
\makeatother

\hypersetup{
    colorlinks = true,
    linkbordercolor = {white},
    linkcolor = {blue},
    anchorcolor = {black},
    citecolor = {blue},
    filecolor = {cyan},
    menucolor = {red},
    runcolor = {cyan},
    urlcolor = {black}
}

\usetikzlibrary{automata}

\theoremstyle{plain}
\newtheorem{theorem}{Theorem}[section]

\theoremstyle{definition}
\newtheorem{definition}[theorem]{Definition}

\newtheorem{example}[theorem]{Example}

\theoremstyle{plain}

\newtheorem{corollary}[theorem]{Corollary}
\newtheorem{lemma}[theorem]{Lemma}

\newtheorem*{theorem-no-label}{Theorem}

\newcommand{\U}{\mathsf{U}}
\newcommand{\B}{\mathsf{B}}
\renewcommand{\H}{\mathrm{H}}

\newcommand{\A}{\mathrm{A}}


\title{Chow polynomials of uniform matroids \\are real-rooted}

\author[P.~Br\"and\'en]{Petter Br\"and\'en}
\address{Department of Mathematics, KTH Royal Institute of Technology, SE-100 44 Stockholm,
Sweden}
\email{pbranden@kth.se, lvecchi@kth.se}

\author[L.~Vecchi]{Lorenzo Vecchi}
\thanks{PB is a Wallenberg Scholar
  supported by the Knut and Alice Wallenberg
  Foundation, and the G\"oran Gustafsson foundation.}
\begin{document}

\begin{abstract}
  June Huh and Matthew Stevens conjectured that the Hilbert--Poincar\'e series of the Chow ring of any matroid is a polynomial with only real zeros. 
  We prove this conjecture for the class of uniform matroids. We also prove that the Chow polynomial and the augmented Chow polynomial of any maximal ranked poset have only real zeros. 
\end{abstract}

\maketitle

\section{Introduction} \thispagestyle{empty}
The introduction of combinatorial Hodge theory led not only to the solution of long-standing conjectures on positivity in matroid theory \cite{adiprasito-huh-katz,braden-huh-matherne-proudfoot-wang}, but also opened up new lines of research, and led to new positivity questions. For example, the Chow ring and augmented Chow ring of matroids---which played key roles in solving these conjectures---have \emph{Hilbert--Poincaré series} that are now conjectured to be real-rooted polynomials \cite[{Conjecture~4.1.3 and 4.3.3}]{stevens-bachelor}.

In \cite{ferroni-matherne-vecchi}, Ferroni, Matherne and the second author generalized this construction and introduced new classes of polynomials associated to bounded and graded posets, called the \emph{Chow polynomial} and \emph{augmented Chow polynomial} of a poset $P$, denoted here by $\H_P(t)$ and $G_P(t)$, respectively. When $P$ is a geometric lattice, these correspond to the Hilbert--Poincaré series of the Chow ring and augmented Chow ring of the corresponding matroid. In \cite[Conjecture~1.5]{ferroni-matherne-vecchi} the authors conjectured that all zeros of these polynomials are real for each \emph{Cohen--Macaulay poset}, and in support of this they proved the $\gamma$-positivity \cite[Theorem~1.4]{ferroni-matherne-vecchi} of these polynomials.  Very little progress has so far been made towards the resolution of these conjectures, showing the need for new methods to approach such problems. 
 In this note we settle the real-rootedness conjecture for Chow rings of uniform geometric lattices. These polynomials were recently studied in \cite{coron, hoster}.
\begin{theorem}\label{chowth}
    Let $\U_{k,n}$ be the uniform geometric lattice of rank $k$ over $n$ elements. Then all zeros of $\H_{\U_{k,n}}(t)$ are real. 
\end{theorem}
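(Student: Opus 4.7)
My plan is to prove real-rootedness of $\H_{\U_{k,n}}(t)$ by induction on $n$, via an interlacing argument in the spirit of classical proofs for Eulerian-type polynomials. The starting point is a deletion-type recursion for $\H_{\U_{k,n}}(t)$, obtained from the semi-small decomposition of Chow rings of matroids (in its poset-theoretic form from \cite{ferroni-matherne-vecchi}) applied to an atom $e$ of $\U_{k,n}$. Since $\U_{k,n}\setminus e \cong \U_{k,n-1}$, the proper flats of $\U_{k,n}$ containing $e$ are the $r$-subsets of $[n]$ through $e$ for $r=1,\dots,k-1$, the localizations at such flats are Boolean matroids, and the contractions are again uniform matroids $\U_{k-r,n-r}$, the resulting identity has the shape
\[
\H_{\U_{k,n}}(t) = \H_{\U_{k,n-1}}(t) + t\sum_{r=1}^{k-1} \binom{n-1}{r-1}\, c_r(t)\, \H_{\U_{k-r,n-r}}(t),
\]
where the coefficient polynomials $c_r(t)$ arise from the Boolean Chow rings and are real-rooted with non-positive zeros.

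\textbf{Interlacing.} The plan is then to strengthen the inductive hypothesis to a statement about interlacing within a suitably enlarged family. The bare family $\{\H_{\U_{k,n}}(t)\}$ is unlikely to suffice, since the contractions on the right-hand side span a range of parameters and degrees. A natural enlargement is to include the augmented Chow polynomials $G_{\U_{k,n}}(t)$, possibly together with ``partial sums'' isolating the Boolean contributions, and to prove simultaneously that designated pairs in the enlarged family interlace. Since multiplication by a polynomial with only non-positive real zeros preserves interlacing on the real-rooted factor, each summand on the right-hand side remains real-rooted under the inductive hypothesis, and if all summands share a common interlacer, the classical linear algebra of interlacing polynomials yields real-rootedness of the sum.

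\textbf{Main obstacle.} The technical heart of the argument will be identifying the correct enlargement of the family, and the correct interlacing pattern among its members, so that the recursion actually closes under a single step. Concretely, one must exhibit a common interlacer for the polynomials $\{\H_{\U_{k,n-1}}(t)\} \cup \{t\, c_r(t)\, \H_{\U_{k-r,n-r}}(t)\}_{r=1}^{k-1}$, uniformly in $(k,n)$, while accounting for the degree shifts introduced by the Boolean factors $c_r(t)$. I expect the key lemma to be a ``master interlacing'' statement---for instance, that $\H_{\U_{k,n-1}}(t)$ interlaces $\H_{\U_{k,n}}(t)$, or that $\H_{\U_{k,n}}(t)$ and $G_{\U_{k,n}}(t)$ stand in a prescribed interlacing relation---preserved by one step of the recursion. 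Finding the right auxiliary polynomials and the right interlacing pattern, and then verifying its preservation under the recursion, is where the essential work would lie; once it is in place, the real-rootedness of $\H_{\U_{k,n}}(t)$ should follow by the standard machinery of common interlacers.
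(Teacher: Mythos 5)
Your proposal is a strategy outline rather than a proof, and the essential content is missing. Everything is deferred to a ``master interlacing'' lemma whose statement you do not pin down: you do not identify the coefficient polynomials $c_r(t)$ in your deletion recursion, you do not specify which enlarged family of polynomials the induction should run over, and you do not exhibit (or even conjecture precisely) the common interlacer for $\{\H_{\U_{k,n-1}}(t)\} \cup \{t\, c_r(t)\, \H_{\U_{k-r,n-r}}(t)\}_{r=1}^{k-1}$. You candidly flag this as the ``technical heart,'' but that heart is exactly where such arguments tend to break down: the summands have different degrees, the contractions range over both parameters simultaneously, and there is no obvious reason a single interlacing pattern should close under one step of the recursion. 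As written, the proposal does not establish the theorem, and it is not clear that this route can be completed without substantial new ideas.

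The paper's proof is quite different and, notably, avoids induction and common interlacers altogether. It uses the truncation formula (Theorem~\ref{thm:truncation formula}), which for uniform lattices gives a recursion in the rank: $f_{n,k}(t) = t f_{n,k-1}(t) + D_{n,k}(t)$ with $D_{n,k}(t) = \sum_{j=0}^k \binom{n}{j} d_j(t)$ a nonnegative combination of derangement polynomials. Combining this with the palindromicity of $f_{n,k}$ yields the closed identity $(t-1)f_{n,k-1}(t) = t^k D_{n,k}(t^{-1}) - D_{n,k}(t)$, so the Chow polynomial is exhibited as a factor of a difference of two explicit polynomials. The interlacing $D_{n,k}(t) \prec t^k D_{n,k}(t^{-1})$ then follows from known results on the deranged map applied to the expansion $\sum_j \binom{n}{j} t^j = \sum_i h_i t^i (1+t)^{k-i}$ with $h_i \geq 0$, and Obreshkoff's theorem finishes the argument. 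If you want to salvage your approach, I would suggest first looking for a recursion in the rank rather than in the ground set, and exploiting palindromicity to eliminate the Chow polynomials from one side of the identity --- that is the step that lets the paper reduce to a statement about classical polynomial transformations rather than about the Chow polynomials themselves.
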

While the real-rootedness of $G_{\U_{k,n}}(t)$ has already been settled in \cite[Theorem~1.10]{ferroni-matherne-stevens-vecchi}, we present a new and independent proof of this fact here in analogy with the proof of Theorem \ref{chowth}. We also prove real-rootedness for two other classes of posets. Let $P^*$ denote the dual of a poset $P$, i.e., the poset where $x \leq^* y$ if and only if $y \leq x$ in $P$. Moreover, define the \emph{truncation} of $P$ to be the poset $\tau(P)$ where we remove all the coatoms of $P$ and the \emph{dual truncation} to be the poset $\sigma(P)$ where we remove all the atoms.
\begin{theorem}\label{thm:truncation and dual truncation}
    Let $P$ be a bounded and graded poset. Then,
    \[
    \H_P(t) = G_{\sigma(P)}(t) = G_{\tau(P^*)}(t).
    \]
\end{theorem}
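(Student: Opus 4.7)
The plan is to prove the two equalities independently.

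I would begin with the second equality. A direct inspection reveals the purely order-theoretic identity $\tau(P^*)=\sigma(P)^*$: the coatoms of $P^*$ are precisely the atoms of $P$, so removing them from $P^*$ leaves the same underlying set as $\sigma(P)$, equipped with the reverse order. The equality $G_{\sigma(P)}(t)=G_{\tau(P^*)}(t)$ is therefore the special case, for $Q=\sigma(P)$, of the general self-duality
\[
G_Q(t) = G_{Q^*}(t),
\]
which I would establish for every bounded graded poset $Q$ by induction on $\rk Q$. Concretely, I would unfold the chain-sum expression for $G_Q$ from \cite{ferroni-matherne-vecchi}: each chain from $\hat{0}$ to $\hat{1}$ contributes a product of factors depending on its successive subintervals, and the reversal bijection between chains in $Q$ and chains in $Q^*$ sends each subinterval to its order-dual. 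Applying the inductive hypothesis on each subinterval then matches the two sums term by term.

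For the first equality $\H_P(t)=G_{\sigma(P)}(t)$, I would again induct on $\rk P$, this time by comparing the recursive definitions of $\H$ and $G$ in \cite{ferroni-matherne-vecchi}. The recursion for $\H_P$ is a sum indexed by proper elements of $P$; I would split this sum according to whether the indexing element is an atom of $P$ or has rank at least two. The rank-$\geq 2$ terms are indexed exactly by the proper elements of $\sigma(P)$, and since the upper intervals $[F,\hat{1}]$ have the same shape in $P$ and in $\sigma(P)$, they match the corresponding terms in the recursion for $G_{\sigma(P)}$ by the inductive hypothesis. The remaining sum---over the atoms of $P$---should be identified with the distinguished ``augmentation'' term (indexed by $\hat{0}$ of $\sigma(P)$) appearing in the definition of $G_{\sigma(P)}$.

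The main obstacle will be matching this atom-sum with the augmentation term. A careful rank-shift bookkeeping is needed---since $\rk_{\sigma(P)}(F)=\rk_P(F)-1$ for every proper $F$ of $\sigma(P)$---together with the exact form of the defining polynomials, to verify that summing over atoms $a$ of $P$ the contribution weighted by $\H_{[a,\hat{1}]}(t)$ reorganizes into the augmentation term prescribed by the recursion for $G$. Once this single identity is in place, both equalities close by induction.
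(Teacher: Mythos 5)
Your reduction of the second equality to the self-duality $G_Q(t)=G_{Q^*}(t)$ via the (correct) observation $\tau(P^*)=\sigma(P)^*$ inverts the paper's logical order: there, $G_P(t)=G_{P^*}(t)$ is Corollary~\ref{cor:G dual P}, \emph{deduced from} Theorem~\ref{thm:truncation and dual truncation}. You would therefore need an independent proof of the self-duality, and the one you sketch does not work. The chain-sum expansion of $\H$ (hence of $G$) has factors $\overline{\chi}_{[x_{i-1},x_i]}$, and neither $\overline{\chi}$ nor $\H$ is invariant under dualizing an interval --- indeed $\H_{(\U_{k,n})^*}(t)\neq\H_{\U_{k,n}}(t)$ in general (compare linear coefficients, which count elements of rank at least $2$). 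So the claim that the reversal bijection on chains ``matches the two sums term by term'' is false: the self-duality of $G$ is a cancellation phenomenon, not a termwise identity, and your inductive hypothesis concerns $G$ of subintervals while the chain-sum factors are reduced characteristic polynomials, so it does not even apply to them.

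Your plan for the first equality can be rescued, but only if the ``recursion indexed by proper elements'' is the numerical canonical decomposition (Theorem~\ref{thm: numerical-canonical-decomposition}), whose coefficients depend only on $\rho(x)$ and whose polynomial factors $\H_{[x,\widehat{1}]}(t)$ involve \emph{upper} intervals, which are unchanged by $\sigma$ and by augmentation. If instead you use the defining recursion $\H_P=\sum_{\widehat{0}<z}\overline{\chi}_{[\widehat{0},z]}\H_{[z,\widehat{1}]}$, the termwise matching fails: for $z$ of rank $2$ one has $\overline{\chi}_{[\widehat{0},z]}=t-(m-1)$ in $P$ (with $m$ the number of atoms below $z$) but $\overline{\chi}=t$ in $\operatorname{aug}(\sigma(P))$. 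With Theorem~\ref{thm: numerical-canonical-decomposition} no induction and no ``atom-sum versus augmentation term'' matching is needed --- atoms contribute $\frac{t^{0}-1}{t-1}=0$ --- and the identity reduces to $t\,\frac{t^{\rho(x)-2}-1}{t-1}+t^{\rho(x)-1}=t\,\frac{t^{\rho(x)-1}-1}{t-1}$; this is essentially what the paper does, packaged through Theorem~\ref{thm: dual truncation} applied to $P$ and to $\operatorname{aug}(\sigma(P))$. The paper then proves the second equality by a separate induction on rank using Theorem~\ref{thm: dual truncation} and Lemma~\ref{lemma: truncation}; that inductive argument is the step your proposal is missing.
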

As a direct consequence, we conclude the real-rootedness of the dual of uniform geometric lattices.
\begin{corollary}\label{cor:dual uniform}
    Let $\U_{k,n}$ be the uniform geometric lattice of rank $k$ over $n$ elements. Then, $\H_{\left(\U_{k,n}\right)^*}(t)$ and $G_{\left(\U_{k,n}\right)^*}(t)$ have only real zeros.
\end{corollary}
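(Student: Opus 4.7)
The plan is to deduce both statements from Theorem~\ref{thm:truncation and dual truncation} applied to two different uniform geometric lattices, together with the real-rootedness of $G_{\U_{k,n}}(t)$ established in \cite{ferroni-matherne-stevens-vecchi}.

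For $\H_{(\U_{k,n})^*}(t)$, I would take $P=(\U_{k,n})^*$ so that $(P^*)^*=\U_{k,n}$ and Theorem~\ref{thm:truncation and dual truncation} yields
\[
\H_{(\U_{k,n})^*}(t) \;=\; G_{\tau(\U_{k,n})}(t).
\]
The coatoms of $\U_{k,n}$ are precisely the $(k-1)$-subsets of $[n]$; deleting them leaves $\hat{0}$ together with the $i$-subsets for $1\le i\le k-2$ and $\hat{1}$, ordered by inclusion, which is $\U_{k-1,n}$. Hence $\H_{(\U_{k,n})^*}(t)=G_{\U_{k-1,n}}(t)$, and the right-hand side is real-rooted.

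For $G_{(\U_{k,n})^*}(t)$ I seek a bounded graded poset $P$ with $\sigma(P)=(\U_{k,n})^*$. The natural candidate is $P=(\U_{k+1,n})^*$: its atoms are the coatoms of $\U_{k+1,n}$, namely the $k$-subsets of $[n]$, and removing them from $(\U_{k+1,n})^*$ leaves $\hat{0}^*$, the $(k-1)$-subsets, the $(k-2)$-subsets, \ldots, the singletons, and $\hat{1}^*$, under reverse inclusion --- exactly $(\U_{k,n})^*$. Theorem~\ref{thm:truncation and dual truncation} then gives
\[
G_{(\U_{k,n})^*}(t) \;=\; \H_{(\U_{k+1,n})^*}(t),
\]
and the argument of the previous paragraph (with $k$ replaced by $k+1$) identifies the right-hand side with $G_{\U_{k,n}}(t)$, which is real-rooted.

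The main technical point is verifying the two poset identifications $\tau(\U_{k,n})=\U_{k-1,n}$ and $\sigma((\U_{k+1,n})^*)=(\U_{k,n})^*$. In each case one must check that removing the relevant layer preserves the graded structure --- which follows because in the uniform setting every $j$-subset lies in some $(j+1)$-subset and contains some $(j-1)$-subset --- and that the inherited cover relations reproduce those of the target poset. Once these identifications are in place, Corollary~\ref{cor:dual uniform} reduces to two applications of Theorem~\ref{thm:truncation and dual truncation}; as an amusing byproduct, the argument also yields the identity $G_{(\U_{k,n})^*}(t)=G_{\U_{k,n}}(t)$.
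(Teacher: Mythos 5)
Your proposal is correct and follows essentially the same route as the paper: the first claim is exactly the paper's argument ($\H_{(\U_{k,n})^*}(t)=G_{\tau(\U_{k,n})}(t)=G_{\U_{k-1,n}}(t)$ via Theorem~\ref{thm:truncation and dual truncation}), and for the second claim you re-derive, in the uniform case, the identity $G_{(\U_{k,n})^*}(t)=G_{\U_{k,n}}(t)$ that the paper obtains by quoting its general Corollary~\ref{cor:G dual P} ($G_P=G_{P^*}$), whose proof is the same two applications of Theorem~\ref{thm:truncation and dual truncation} that you carry out with $(\U_{k+1,n})^*$ in place of $\operatorname{aug}((\U_{k,n})^*)$. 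Both poset identifications you flag, $\tau(\U_{k,n})=\U_{k-1,n}$ and $\sigma((\U_{k+1,n})^*)=(\U_{k,n})^*$, are immediate from the definition $\U_{k,n}=\tau^{n-k}\B_n$, so the argument is complete.
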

Observe that the dual operation we are considering is on the poset, not on the matroid. In particular, the poset we obtain is not the lattice of flats of a matroid. Lastly, we consider \emph{maximal ranked posets}, i.e., graded posets in which all possible order relations between elements of different ranks are present.
\begin{theorem}\label{maxrank}
    Let $P$ be a maximal ranked poset. Then $\H_P(t)$ and $G_P(t)$ have only real zeros. 
\end{theorem}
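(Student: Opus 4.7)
The plan is to reduce both real-rootedness statements to a single one, then prove that statement by an inductive argument based on interlacing polynomials.

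\textbf{Reduction.} By Theorem~\ref{thm:truncation and dual truncation}, $\H_P(t)=G_{\sigma(P)}(t)$. The class of maximal ranked posets is closed under the dual truncation $\sigma$, since removing all atoms preserves the property that any two elements at different ranks are comparable. Conversely, given any maximal ranked poset $Q$, prepending an antichain of arbitrary positive size as new atoms produces a maximal ranked poset $P$ with $\sigma(P)=Q$. Hence real-rootedness of $\H_P(t)$ for all maximal ranked $P$ is equivalent to real-rootedness of $G_P(t)$ for all such $P$, and it suffices to prove one of these statements.

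\textbf{Structure and recursion.} A maximal ranked poset is determined up to isomorphism by its rank sequence $(1,n_1,\dots,n_{r-1},1)$: concretely, it is the ordinal sum of a chain with antichains of sizes $n_1,\dots,n_{r-1}$ inserted at the middle ranks. Every interval $[x,y]$ is again maximal ranked, with rank sequence a contiguous subsequence of $(1,n_1,\dots,n_{r-1},1)$. Substituting this rigid structure into the recursive formulas for $\H_P(t)$ and $G_P(t)$ from \cite{ferroni-matherne-vecchi} produces an explicit recursion expressing $h_{\mathbf{n}}(t) := \H_P(t)$, with $\mathbf{n}=(n_1,\dots,n_{r-1})$, as a nonnegative combination of products of Chow (or augmented Chow) polynomials attached to maximal ranked posets with strictly shorter index sequences.

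\textbf{Interlacing induction.} Real-rootedness is then established by induction on $r$, with base case $r\le 1$ trivial. The heart of the argument is to identify an interlacing relation within the multi-parameter family $\{h_{\mathbf{n}}(t)\}$---for instance, that $h_{\mathbf{n}}(t)$ and $h_{\mathbf{n}'}(t)$ interlace whenever $\mathbf{n}'$ is obtained from $\mathbf{n}$ by incrementing a single coordinate, or more generally that the family is compatible in the sense of Chudnovsky--Seymour. Once such an interlacing is in place, standard criteria (e.g.\ Fisk's lemma on nonnegative combinations of polynomials with a common interleaver) guarantee that the recursion preserves the interlacing class, so the resulting polynomials are real-rooted. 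The main obstacle is pinning down the correct interlacing structure among the $h_{\mathbf{n}}(t)$ and verifying that it is propagated by the recursion; once the right compatibility is formulated, the remainder of the induction is mechanical.
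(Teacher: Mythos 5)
Your reduction of the two statements to one is sound and essentially matches the paper (the paper goes the other way, noting $G_P=\H_{\operatorname{aug}(P)}$ and that $\operatorname{aug}(P)$ is again maximal ranked; prepending a single new atom, as in your construction, is exactly the augmentation). Your structural observations are also correct: a maximal ranked poset is an ordinal sum of antichains, intervals are again maximal ranked, and truncation shortens the rank sequence. But the proof stops exactly where the real work begins. You assert that the recursive formulas ``produce an explicit recursion'' without deriving one, and you then say that the main obstacle is ``pinning down the correct interlacing structure and verifying that it is propagated by the recursion'' --- which is to say, you have stated the problem rather than solved it. A plan of the form ``find an interlacing relation compatible with an unspecified recursion'' is not a proof.

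Concretely, two things are missing. First, the recursion: because the truncation of $P^c_n$ is $P^c_{n-1}$ and lower ideals of rank-$(k+1)$ elements are copies of $P^c_k$, Lemma~\ref{lemma: truncation} specializes to the three-term recursion $f_n(t)=(1+t)f_{n-1}(t)+t(c_n-1)f_{n-2}(t)$, where $f_n=\H_{P^c_n}$; the nonnegativity of the coefficient $c_n-1$ is what makes everything work. Second, the interlacing relation you guess (between $h_{\mathbf n}$ and $h_{\mathbf n'}$ with one coordinate incremented) is not the one that drives the induction; the correct statement is $f_{n-1}(t)\prec f_n(t)$, i.e., interlacing along truncation. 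Given the inductive hypothesis $f_{n-1}\prec f_n$, all zeros are nonpositive, so $f_n\prec tf_{n-1}$ and $f_n\prec(1+t)f_n$, and since the set of polynomials interlaced by a fixed polynomial is a convex cone, $f_n\prec(1+t)f_n+(c_{n+1}-1)tf_{n-1}=f_{n+1}$. Without deriving the recursion and supplying this interlacing argument (or an equivalent one), the proposal has a genuine gap at its central step.
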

\section{Background}

\subsection{Posets}
For undefined terminology on partially ordered sets (posets), we refer to \cite[Chapter 3]{stanley-ec1}.
In a bounded and graded poset with rank function $\rho$, we call $P_m = \{x\in P \mid \rho(x) = m\}$  the $m$-th rank level, and we call the elements of $P_1$ the atoms and the elements of $P_{\rho(P)-1}$ the coatoms of $P$. We also write $\widehat{0}$ and $\widehat{1}$ for the least and largest element, respectively. The \emph{truncation} $\tau(P)$ of $P$ is the poset obtained by removing all coatoms of $P$. Dually, we define the \emph{dual truncation} $\sigma(P)$ to be the poset obtained by removing all the atoms of $P$. Lastly, the \emph{augmentation} $\operatorname{aug}(P)$ is constructed by introducing a new element and declaring it to be the new least element of the poset. 

\subsubsection{Uniform geometric lattices}
We remark that geometric lattices are in bijection with the class of simple matroids. However, since all our tools can be described strictly in terms of poset-theoretic constructions, we prefer to define these objects only as posets, with the knowledge that geometric lattices carry with them additional structures. 
\begin{definition}
    Let $n$ be a nonnegative integer. The Boolean algebra on $n$ elements $\B_n$ is the partially ordered set defined by ordering the power set $2^{[n]}$ by inclusion.
\end{definition}

\begin{definition}
    The uniform geometric lattice $\U_{k,n}$ is the poset obtained by truncating $(n-k)$ times the Boolean algebra $\B_n$, that is,
    \[
    \U_{k,n} := \tau^{n-k}\B_n.
    \]
\end{definition}
Observe that the Boolean algebra is a uniform geometric lattice, as $\B_n = \U_{n,n}$.

\subsubsection{Maximal ranked posets}
Fix a sequence of positive integers $c = (c_i)_{i\geq 1}$. Consider the infinite family $P^{c}_{n}$ constructed by declaring the rank level $P_i$ to have $c_i$ elements for $1 \leq i \leq n$, adding a minimal element $\widehat{0}$ and a maximal element $\widehat{1}$ and all the possible order relations among rank levels.
\begin{example}
Let $(c_i)_i=(3,3,\ldots)$. The poset $P^c_2$ is depicted in Figure \ref{fig:Pc2}. 
\begin{figure}[ht]
    \centering
	\begin{tikzpicture}  
	[scale=0.7,auto=center,every node/.style={circle,scale=0.8, fill=black, inner sep=2.7pt}] 
	\tikzstyle{edges} = [thick];
	
	\node[] (a1) at (0,0) {};  
	\node[] (a2) at (-1,1)  {};  
	\node[] (a3) at (0,1) {};
	\node[] (a4) at (1,1) {};
	\node[] (a5) at (-1,2)  {};  
	\node[] (a6) at (0,2)  {};  
	\node[] (a7) at (1,2)  {};  
    \node[] (a8) at (0,3) {};
	
	\draw[edges] (a1) -- (a2);  
	\draw[edges] (a1) -- (a3);  
	\draw[edges] (a1) -- (a4);
	\draw[edges] (a2) -- (a5);
        \draw[edges] (a2) -- (a6);
	\draw[edges] (a2) -- (a7);
	\draw[edges] (a3) -- (a5);
	\draw[edges] (a3) -- (a6);
	\draw[edges] (a3) -- (a7);
	\draw[edges] (a4) -- (a5);
	\draw[edges] (a4) -- (a6);
	\draw[edges] (a4) -- (a7);
    \draw[edges] (a5) -- (a8);
	\draw[edges] (a6) -- (a8);
	\draw[edges] (a7) -- (a8);
	\end{tikzpicture}\caption{A maximal ranked poset.}\label{fig:Pc2}
\end{figure}
\end{example}
Clearly, the class of maximal ranked posets is closed under the operations of truncation, dual truncation and augmentation.

\subsection{Derangements and Eulerian polynomials}
We recall the definitions of two families of polynomials that are important when dealing with Hilbert series of Chow rings and augmented Chow rings of uniform and Boolean geometric lattices.

\subsubsection{Eulerian polynomials} 

One of the most pervasive objects in enumerative combinatorics is the family of  Eulerian polynomials. Given a permutation $\pi\in\mathfrak{S}_n$, $n\geq 1$, written in one-line notation as $\pi=\pi_1\pi_2 \cdots\pi_n$, the number of \emph{descents} of $\pi$ is defined as the cardinality of the set $\{i\in [n-1]:\pi_i>\pi_{i+1}\}$, and is denoted by $\operatorname{des}(\pi)$. For $n\geq 1$, we define the $n$-th \emph{Eulerian polynomial} as follows:
    \[ A_n(t) := \sum_{\pi\in\mathfrak{S}_n} t^{\operatorname{des}(\pi)}.\] 
We note that this differs by a factor of $t$ from the definition in \cite[p.~33]{stanley-ec1}. Furthermore, we define $A_0(t)=1$. 

The polynomials $A_n(t)$ are palindromic and $\deg A_n(t) = n-1$. It is a classical result attributed to Frobenius that these polynomials are real-rooted (for a proof, see \cite[Example 7.3]{branden}). The coefficients of the Eulerian polynomials admit several combinatorial interpretations, many of which can be found in \cite[Chapter~1]{stanley-ec1}.

\subsubsection{Derangement polynomials}

A permutation $\pi\in\mathfrak{S}_n$ is said to be a \emph{derangement} if $\pi_i\neq i$ for all $i$, i.e., if $\pi$ has no fixed points. The set of all derangements on $n$ elements is usually denoted by $\mathfrak{D}_n$. For each $n\geq 1$, the $n$-th \emph{derangement polynomial}, denoted $d_n(t)$, is defined by
    \[ d_n(t) := \sum_{\pi\in\mathfrak{D}_n} t^{\operatorname{exc}(\pi)}.\]
where $\operatorname{exc}(\pi):=|\{i\in [n]:\pi_i>i\}|$ denotes the number of \emph{excedances} of $\pi$. Also, $d_0(t):=1$.

Notice that $\deg d_n(t) = n-1$ for each $n\geq 1$. With only the exception of $d_0(t)=1$, the polynomial $d_n(t)$ is a multiple of $t$ and is symmetric (palindromic) with center of symmetry ${n}/{2}$. Lastly, derangement polynomials are known to be real-rooted (see e.g. \cite[Section 3.2]{branden-solus}).

\subsection{Characteristic Chow polynomials of graded bounded posets}
In this section we recall the definition of the main classes of polynomials of interest. We will be focusing on what in \cite{ferroni-matherne-vecchi} is called the \emph{characteristic Chow function}, even though we remark that these definitions can be given with respect to any \emph{$P$-kernel}. Since we are not concerned with different types of Chow functions, we will call these for simplicity just \emph{Chow functions}. For further details we refer to \cite[Section~3 and 4]{ferroni-matherne-vecchi}. 

Let $P$ be a bounded and graded poset. Recall that the \emph{M\"obius function} of $P$ $\mu\colon P \times P\to\mathbb{Z}$ is defined as
\[
\mu(x,y) = \mu_{xy} :=
\begin{cases}
    0 & \text{if $x \not< y$}\\
    1 & \text{if $x=y$}\\
    -\sum_{x\leq z < y}\mu_{xz} &\text{otherwise}.
\end{cases}
\]
The \emph{characteristic function} of $P$  is defined as the element of the incidence algebra $\chi \in \mathcal{I}(P)$ that associates to an interval $[x,y]$ the polynomial
\[
\chi_{[x,y]}(t) = \sum_{x\leq z \leq y}\mu_{xz}t^{\rho(y) - \rho(z)}.
\]
It is not hard to show that whenever the interval $[x,y]$ is not trivial, then $\chi_{[x,y]}(t)$ is divisible by $t-1$. The reduced characteristic function is defined as
\[
\overline{\chi}_{[x,y]}(t) = \begin{cases}
    -1 & \text{if $x=y$}\\
    \frac{\chi_{[x,y]}(t)}{t-1} & \text{otherwise}
\end{cases},
\]
and the \emph{characteristic Chow function} is then defined as $\H = (-\overline{\chi})^{-1}$, i.e., the inverse (up to a sign) of the reduced characteristic function in the incidence algebra. More explicitly
\[
\H_{[x,y]}(t) := \sum_{x<z\leq y}\overline{\chi}_{[x,z]}(t)\H_{[z,y]}(t).
\]

The \emph{Chow polynomial} of the poset $P$ is then defined by  $\H_P(t) := \H_{[\widehat{0},\widehat{1}]}(t)$. For every bounded and graded poset $P$, $\H_P(t)$ is palindromic \cite[Proposition~3.6(ii)]{ferroni-matherne-vecchi} of degree $\rho(P)-1$, and its coefficients form a non-negative and unimodal sequence \cite[Theorem~1.1]{ferroni-matherne-vecchi}. In addition to that, it is $\gamma$-positive for Cohen--Macaulay posets \cite[Theorem~1.4]{ferroni-matherne-vecchi} and is conjectured to be real-rooted for every Cohen--Macaulay poset \cite[Conjecture~1.5]{ferroni-matherne-vecchi}. For geometric lattices, the real-rootedness conjecture was first formulated in \cite[Conjecture~4.1.3]{stevens-bachelor}. The following theorem suggests that there is a rich enumerative structure to be revealed in the theory of Chow polynomials.

\begin{theorem}[{\cite[Theorem~5.1]{hameister-rao-simpson}}]
    Let $n\geq 2$. Then,
    \[
    A_n(t) = \H_{B_n}(t) \qquad \text{and} \qquad d_n(t) = t\H_{\U_{n-1,n}}(t).
    \]
\end{theorem}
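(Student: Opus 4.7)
The plan is to prove both identities by computing closed-form exponential generating functions (EGFs) for the Chow polynomials directly from the defining recursion, and matching them against the classical EGFs of $A_n(t)$ and $d_n(t)$. The classical inputs I would use are Frobenius's identity $A(x) := \sum_{n \geq 0} A_n(t)\, x^n/n! = (t-1)/(t - e^{(t-1)x})$ and the fixed-point decomposition $A_n(t) = \sum_{k=0}^{n} \binom{n}{k} d_k(t)$ (proved using the excedance realization of both polynomials, since excedances of the restricted derangement are precisely the excedances of the full permutation), which translates into $\sum_{n \geq 0} d_n(t)\, x^n/n! = e^{-x}A(x)$.

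For the Boolean case, I observe that every interval $[S,T]$ in $\B_n$ is isomorphic to $\B_{|T\smallsetminus S|}$, and $\chi_{\B_n}(t) = (t-1)^n$, so $\overline{\chi}_{\B_n}(t) = (t-1)^{n-1}$. The Chow recursion then specializes to
\[
\H_{\B_n}(t) = \sum_{k=1}^{n} \binom{n}{k}(t-1)^{k-1}\,\H_{\B_{n-k}}(t), \qquad n\geq 1, \qquad \H_{\B_0}(t)=1.
\]
Setting $H(x) := \sum_{n\geq 0}\H_{\B_n}(t)\,x^n/n!$, this recursion becomes the functional equation $H(x) - 1 = \frac{e^{(t-1)x}-1}{t-1}\,H(x)$, whose unique solution is $H(x) = (t-1)/(t - e^{(t-1)x}) = A(x)$, proving $\H_{\B_n}(t) = A_n(t)$.

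For the uniform case, write $K_m(t) := \H_{\U_{m-1,m}}(t)$. Using that $[\emptyset,S] \cong \B_{|S|}$ for $|S|\leq n-2$ and $[S,\widehat{1}] \cong \U_{n-1-|S|,n-|S|}$, together with a direct Möbius calculation (invoking $\sum_{s=0}^{m}(-1)^s\binom{n}{s} = (-1)^m\binom{n-1}{m}$) yielding
\[
\overline{\chi}_{\U_{n-1,n}}(t) = \frac{(t-1)^{n-1} - (-1)^{n-1}}{t},
\]
the Chow recursion reads, for $n\geq 2$,
\[
K_n(t) = \sum_{s=1}^{n-2}\binom{n}{s}(t-1)^{s-1}\,K_{n-s}(t) + \overline{\chi}_{\U_{n-1,n}}(t),
\]
with $K_1(t)=1$. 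Passing to $K(x) := \sum_{m\geq 1}K_m(t)\,x^m/m!$ and summing the explicit formula for $\overline{\chi}$ gives $t X(x) = \frac{e^{(t-1)x}-1}{t-1} + e^{-x} - 1$, where $X(x) := \sum_{n\geq 2}\overline{\chi}_{\U_{n-1,n}}(t)\,x^n/n!$. The recursion then collapses to $K(x) - x = \frac{e^{(t-1)x}-1}{t-1}(K(x)-x) + X(x)$, solved by $K(x) = x + X(x)H(x)$.

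To conclude, I would multiply $K(x) = x + X(x)H(x)$ by $t$ and use the identity $\frac{e^{(t-1)x}-1}{t-1}H(x) = H(x)-1$ (immediate from the closed form of $H$) to obtain
\[
tK(x) - tx = (H(x)-1) + (e^{-x}-1)H(x) = e^{-x}H(x) - 1.
\]
Since $d_0(t)=1$ and $d_1(t)=0$, the right-hand side equals $\sum_{n\geq 2} d_n(t)\,x^n/n!$, and matching coefficients of $x^n/n!$ for $n\geq 2$ yields $d_n(t) = tK_n(t) = t\H_{\U_{n-1,n}}(t)$. The hard part is the explicit computation of $\overline{\chi}_{\U_{n-1,n}}(t)$ and the careful bookkeeping of the boundary contributions (the base case $K_1 = 1$ and the $z = \widehat{1}$ term of the recursion, which contains the non-Boolean information); once these are in place the remaining EGF manipulations are mechanical.
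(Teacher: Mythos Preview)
Your proposal is correct. Note, however, that the paper does not give its own proof of this statement: it is quoted as \cite[Theorem~5.1]{hameister-rao-simpson} and used only as background motivation, so there is no in-paper argument to compare against.

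That said, your EGF argument is sound and self-contained. The computation of $\overline{\chi}_{\U_{n-1,n}}(t) = \big((t-1)^{n-1} - (-1)^{n-1}\big)/t$ is correct, as is the specialization of the defining recursion $\H = (-\overline{\chi})^{-1}$ in both the Boolean and once-truncated Boolean cases. The bookkeeping is right: in the convolution $\frac{e^{(t-1)x}-1}{t-1}\,(K(x)-x)$ only terms $K_m$ with $m\geq 2$ appear, matching the range $1\leq s\leq n-2$ in the recursion, and the base value $K_1(t)=1$ cancels the stray $tx$ at the end. The relation $A_n(t)=\sum_k \binom{n}{k} d_k(t)$ via the fixed-point decomposition (using that excedances are preserved under the order-isomorphism from an arbitrary $k$-subset to $[k]$) is the standard route to $\sum d_n(t)x^n/n! = e^{-x}A(x)$, and the final simplification using $\frac{e^{(t-1)x}-1}{t-1}H(x)=H(x)-1$ is exactly the functional equation already established for $H$. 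One cosmetic remark: since the paper defines $A_n(t)$ via descents, you might add the one-line justification that $\operatorname{des}$ and $\operatorname{exc}$ are equidistributed on $\mathfrak{S}_n$ before invoking the excedance realization.
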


One could use the given definition of $\H_P(t)$ as a recursive formula to compute the Chow polynomial of an interval by knowing all Chow polynomials on smaller sub-intervals. One of the main issues with this formula is that the recursion involves two different types of polynomials, namely $\overline{\chi}$ and $\H$. We now recall two formulas that bypass this issue. 

The following recursion lets us compute the Chow polynomial of $P$ in terms of its truncations. 
\begin{theorem}[{\cite[Proposition~4.12]{ferroni-matherne-vecchi}}]\label{thm:truncation formula}
Let $P$ be a bounded and graded poset. Then
    \[
    \H_P(t) = 1 + t\sum_{\substack{x\in P \\ \rho(x) \geq 2}}\H_{\tau([\widehat{0},x])}(t).
    \]
\end{theorem}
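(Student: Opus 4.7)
The plan is to establish the identity by strong induction on $n := \rho(P)$; the cases $n \leq 2$ are verified by direct computation. The key auxiliary lemma, proven first by expanding definitions, is that for every bounded graded poset $Q$ of rank at least two,
\[
\overline{\chi}_Q(t) = t\,\overline{\chi}_{\tau(Q)}(t) - \mu_Q(\widehat{0}, \widehat{1}),
\]
which follows by comparing the expansions of $\chi_Q(t)$ and $t\,\chi_{\tau(Q)}(t)$ term-by-term: all internal contributions cancel, while the shift $\mu_{\tau(Q)}(\widehat{0}, \widehat{1}) - \mu_Q(\widehat{0}, \widehat{1})$ equals the sum of Möbius values on the coatoms of $Q$.

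For the inductive step with $n \geq 3$, denote the claimed right-hand side by $R(t)$. Expanding each $\H_{\tau([\widehat{0}, x])}(t)$ via the defining recursion of $\H$ inside the truncated interval $\tau([\widehat{0}, x])$ and swapping the order of summation, the inner sum $t \sum_{w : u < w \leq \widehat{1},\, \rho(w) - \rho(u) \geq 2} \H_{\tau([u, w])}(t)$ appears, with $u$ ranging over $\widehat{0} < u < \widehat{1}$ satisfying $\rho(u) \leq n - 2$. By the induction hypothesis applied to $[u, \widehat{1}]$, this inner sum equals $\H_{[u, \widehat{1}]}(t) - 1$. Using the auxiliary lemma to replace $t\,\overline{\chi}_{\tau([\widehat{0}, x])}(t)$ by $\overline{\chi}_{[\widehat{0}, x]}(t) + \mu(\widehat{0}, x)$ and comparing with the defining recursion $\H_P(t) = \sum_{z > \widehat{0}} \overline{\chi}_{[\widehat{0}, z]}(t)\,\H_{[z, \widehat{1}]}(t)$, the difference $\H_P(t) - R(t)$ collapses, after cancellations of intermediate-rank contributions, to the scalar
\[
|\{\text{atoms of } P\}| - 1 - \sum_{\rho(z) \geq 2} \mu(\widehat{0}, z).
\]
This vanishes by the defining property of the Möbius function $\sum_{z \in P} \mu(\widehat{0}, z) = 0$ (valid since $P \neq \{\widehat{0}\}$), completing the inductive step.

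The main obstacle will be the combinatorial bookkeeping in collapsing the double sum: the sums range over mismatched rank intervals ($\rho \geq 2$ on one side and $\rho \leq n - 2$ on the other), and only after the intermediate-rank $\overline{\chi}_{[\widehat{0}, \cdot]}\,\H_{[\cdot, \widehat{1}]}$ terms pair off do the boundary contributions at rank $1$ (atoms), rank $n - 1$ (coatoms), and rank $n$ (the top $\widehat{1}$) assemble into the Möbius identity. Carrying out this reshuffling cleanly is the only delicate step in the proof.
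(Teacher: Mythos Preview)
The paper does not prove this statement: Theorem~\ref{thm:truncation formula} is quoted as \cite[Proposition~4.12]{ferroni-matherne-vecchi} and used without argument, so there is no in-paper proof to compare against.

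That said, your proposed proof is correct. The auxiliary identity
\[
\overline{\chi}_Q(t)=t\,\overline{\chi}_{\tau(Q)}(t)-\mu_Q(\widehat{0},\widehat{1})
\]
follows exactly as you describe: comparing $\chi_Q(t)$ and $t\,\chi_{\tau(Q)}(t)$, all terms with $\rho(z)\leq r-2$ match, and the remaining degree-$1$ and degree-$0$ pieces combine (via $\mu_{\tau(Q)}(\widehat{0},\widehat{1})-\mu_Q(\widehat{0},\widehat{1})=\sum_{\rho(w)=r-1}\mu_Q(\widehat{0},w)$) to $-\mu_Q(\widehat{0},\widehat{1})(t-1)$. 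In the inductive step, after swapping sums and invoking the hypothesis on each $[u,\widehat{1}]$, the $\overline{\chi}_{[\widehat{0},\cdot]}\H_{[\cdot,\widehat{1}]}$ terms in the ranges $2\leq\rho\leq n-2$, $\rho=n-1$, and $\rho=n$ cancel against the defining recursion for $\H_P$, while the $\rho=1$ terms contribute exactly the atom count. The residual scalar
\[
|\{\text{atoms}\}|-1-\sum_{\rho(z)\geq 2}\mu(\widehat{0},z)
\]
vanishes by $\sum_{z\in P}\mu(\widehat{0},z)=0$, as you say. The bookkeeping you flagged as the delicate step does indeed close up cleanly; your proof goes through.
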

The formula was first proven to hold on geometric lattices in \cite[Corollary~3.5]{larson}.

The following identity will be used later in the paper
\begin{lemma}\label{lemma: truncation}
Let $P$ be a bounded and graded poset of rank $r$. Then
    \[
    \H_P(t) = (1+t)\H_{\tau(P)}(t) - t \H_{\tau^2(P)}(t) + t\sum_{\rho(x) = r-1}\H_{\tau([\widehat{0},x])}(t).
    \]
\end{lemma}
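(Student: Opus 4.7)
The plan is to derive this identity by applying Theorem~\ref{thm:truncation formula} twice, once to $P$ and once to $\tau(P)$, and subtracting. Let $r = \rho(P)$. From Theorem~\ref{thm:truncation formula} applied to $P$, organizing the sum by rank levels, I would write
\[
\H_P(t) = 1 + t\sum_{k=2}^{r-2}\sum_{\rho(x)=k}\H_{\tau([\widehat{0},x])}(t) + t\sum_{\rho(x)=r-1}\H_{\tau([\widehat{0},x])}(t) + t\,\H_{\tau([\widehat{0},\widehat{1}])}(t),
\]
where the last term is the contribution of $x=\widehat{1}$ and equals $t\,\H_{\tau(P)}(t)$. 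The middle term is already the sum over coatoms that appears in the target identity, so only the first sum needs to be re-expressed.

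To do that, I would apply Theorem~\ref{thm:truncation formula} to the truncation $\tau(P)$, which is a bounded graded poset of rank $r-1$. Its rank levels $1,\ldots,r-2$ coincide (as sets) with those of $P$, and for any $x \in \tau(P)$ other than $\widehat{1}$, the interval $[\widehat{0},x]$ computed in $\tau(P)$ is the same as in $P$, because removing the coatoms of $P$ does not affect anything below rank $r-1$. For $x=\widehat{1}$, the interval is $\tau(P)$ itself, and its truncation is $\tau^{2}(P)$. Therefore
\[
\H_{\tau(P)}(t) = 1 + t\sum_{k=2}^{r-2}\sum_{\rho(x)=k}\H_{\tau([\widehat{0},x])}(t) + t\,\H_{\tau^{2}(P)}(t),
\]
so the block of terms $1 + t\sum_{k=2}^{r-2}\sum_{\rho(x)=k}\H_{\tau([\widehat{0},x])}(t)$ equals $\H_{\tau(P)}(t) - t\,\H_{\tau^{2}(P)}(t)$. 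Substituting back yields exactly the stated formula.

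The main obstacle, such as it is, is purely bookkeeping: one must be confident that intervals below the coatoms of $P$ are intrinsic to the poset structure and unaffected by truncation, and one must handle the rank-bounded small cases ($r\leq 2$) with the usual conventions ($\H$ of a rank-$0$ or rank-$1$ poset equals $1$, and the sum over coatoms is treated consistently with Theorem~\ref{thm:truncation formula}). No new ideas beyond Theorem~\ref{thm:truncation formula} are required.
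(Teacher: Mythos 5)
Your proposal is correct and follows essentially the same route as the paper: apply Theorem~\ref{thm:truncation formula} to both $P$ and $\tau(P)$, note that intervals below coatoms are unaffected by truncation, and combine the two identities (the paper subtracts where you substitute, which is the same computation). No issues.
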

\begin{proof}
    We use Theorem \ref{thm:truncation formula} on $P$ and $\tau(P)$. 
    \begin{align*}
        \H_P(t) &= 1 + t\sum_{2\leq \rho(x)\leq r-1}\H_{\tau([\widehat{0},x])}(t) + t\H_{\tau(P)}(t),\\
        \H_{\tau(P)}(t) &= 1 + t\sum_{2\leq \rho(x)\leq r-2}\H_{\tau([\widehat{0},x])}(t) + t\H_{\tau^2(P)}(t).\\
    \end{align*}
    The result follows after subtracting the second equation from the first.
\end{proof}

The second formula is a specialization of \cite[Theorem~3.9]{ferroni-matherne-vecchi} in the case of characteristic Chow polynomials.
\begin{theorem}\label{thm: numerical-canonical-decomposition}
Let $P$ be a bounded and graded poset. Then
    \[
    \H_P(t) = \frac{t^{\rho(P)}-1}{t-1} + t\sum_{\widehat{0}<x<\widehat{1}} \frac{t^{\rho(x)-1}-1}{t-1}\H_{[x,\widehat{1}]}(t).
    \]
\end{theorem}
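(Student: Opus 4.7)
The plan is to recast the desired identity as a single convolution equation in the incidence algebra $\mathcal{I}(P)[t]$. Writing $[n]_t := (t^n-1)/(t-1)$ for brevity, introduce the element $F \in \mathcal{I}(P)[t]$ by setting $F_{[x,x]} := 0$ and
\[
F_{[x,y]}(t) := t\,[\rho(y)-\rho(x)-1]_t \qquad (x < y).
\]
Using the elementary identity $[n]_t = 1 + t[n-1]_t$ (which absorbs the free term $[\rho(P)]_t$ into a single sum that also runs over the endpoint $x = \widehat{1}$), the theorem --- applied uniformly to every interval $[a,b]$ of $P$ --- is equivalent to the single equation
\[
(\delta - F) * \H = \zeta
\]
in $\mathcal{I}(P)[t]$, where $\delta,\zeta$ are the delta and zeta elements.

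Since $\H^{-1} = -\overline{\chi}$ and $\zeta^{-1} = \mu$, the displayed equation is equivalent (by left-multiplying by $\mu$ and right-multiplying by $-\overline{\chi}$) to the assertion
\[
\mu * F = \mu + \overline{\chi}.
\]
This reformulation is the combinatorial core of the argument; the Chow polynomial itself drops out of the picture and everything reduces to a M\"obius-theoretic identity in $P$.

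To verify this identity, fix $a < b$ in $P$ and compute
\[
(\mu * F)_{[a,b]}(t) = \frac{t}{t-1}\sum_{a \leq z < b}\mu_{[a,z]}\bigl(t^{\rho(b)-\rho(z)-1} - 1\bigr).
\]
The sum splits into two pieces. After pulling out a factor of $t^{-1}$, the first piece equals $t^{-1}\bigl(\chi_{[a,b]}(t) - \mu_{[a,b]}\bigr)$, by isolating the $z=b$ summand in the defining expansion of $\chi_{[a,b]}$. The second piece equals $\mu_{[a,b]}$ by the M\"obius relation $\sum_{a \leq z \leq b}\mu_{[a,z]} = 0$, which holds since $a < b$. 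A brief algebraic collapse then produces $\overline{\chi}_{[a,b]} + \mu_{[a,b]}$, as required. The diagonal case $a = b$ is immediate since both sides vanish.

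Finally, evaluating $(\delta - F)*\H = \zeta$ at $[\widehat{0},\widehat{1}]$, isolating the $x = \widehat{0}$ term (where $F$ vanishes) and the $x = \widehat{1}$ term (contributing $F_{[\widehat{0},\widehat{1}]}\cdot\H_{[\widehat{1},\widehat{1}]} = t[\rho(P)-1]_t$), and applying $1 + t[\rho(P)-1]_t = [\rho(P)]_t$, recovers the stated formula. I expect the main obstacle to be pinpointing the correct definition of $F$ so that the M\"obius combinatorics cancels cleanly; once this choice is made, the remaining steps are routine bookkeeping with signs and boundary terms.
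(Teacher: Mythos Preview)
Your proof is correct. Note, however, that the paper does not supply its own proof of this statement: it quotes the result as a specialization of Theorem~3.9 of Ferroni--Matherne--Vecchi and then uses it as a tool. Your argument therefore fills in a self-contained proof where the paper simply defers to a reference.

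Your approach --- rewriting the identity as the convolution equation $(\delta - F)*\H = \zeta$ in the incidence algebra and then inverting on both sides to reduce to the purely M\"obius-theoretic identity $\mu * F = \mu + \overline{\chi}$ --- is clean and economical. The key virtue is that the Chow function $\H$ disappears after inversion, and what remains is a direct computation with the characteristic polynomial and the M\"obius relation. This is very much in the spirit of how $\H$ is defined (as an inverse in the incidence algebra), so it is natural that the shortest proof proceeds by undoing that inversion. The two-piece split in the verification of $\mu * F = \mu + \overline{\chi}$ is correct as written, and the boundary cases (the diagonal $a=b$, and the endpoint terms $x=\widehat{0}$ and $x=\widehat{1}$ in the final unwinding) are all handled properly.

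One minor stylistic remark: the closing sentence (``I expect the main obstacle\ldots'') reads as a planning note rather than part of a finished argument and should be deleted in a final version.
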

This is also referred to as one of the numerical canonical decompositions, see the discussion in \cite[Section~4.4]{ferroni-matherne-vecchi} for an explanation regarding the algebro-geometric motivation behind this name.

A closely related family of polynomials is the one of \emph{(left) augmented Chow polynomials}. Given a bounded and graded poset $P$ we define for every $x\leq y$
\[
G_{[x,y]}(t) := \sum_{x\leq z \leq y}t^{\rho(z)-\rho(x)}\H_{[z,y]}(t),
\]
and call $G_P(t) := G_{[\widehat{0},\widehat{1}]}(t)$ the left augmented Chow polynomial of $P$. These polynomials were first introduced in the combinatorial Hodge theory of geometric lattices \cite{semismall}. However, the following result lets us deal with augmented Chow polynomials as Chow polynomials. 
\begin{theorem}[{\cite[Corollary~4.6]{ferroni-matherne-vecchi}}]\label{thm: G is H aug}
Let $P$ be a bounded and graded poset. Then
\[
G_P(t) = \H_{\operatorname{aug}(P)}(t).
\]
\end{theorem}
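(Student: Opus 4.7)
The plan is to derive a three-term recurrence for the Chow polynomials of maximal ranked posets and then run a standard interlacing induction, afterward handling $G_P$ via the observation that augmentation keeps us inside the class of maximal ranked posets.

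Fix a sequence $c=(c_i)_{i\geq 1}$ of positive integers and set $\H^{c}_{n}(t) := \H_{P^{c}_{n}}(t)$. Because every rank-$i$ element sits below every rank-$j$ element for $i<j$, any principal order ideal $[\widehat{0},x]$ with $\rho(x)=k$ is itself a maximal ranked poset on the truncated sequence $(c_1,\ldots,c_{k-1})$ and is hence isomorphic to $P^{c}_{k-1}$. In particular $\tau(P^{c}_{n})\cong P^{c}_{n-1}$, $\tau^{2}(P^{c}_{n})\cong P^{c}_{n-2}$, and for each of the $c_n$ coatoms $x$ one has $\tau([\widehat{0},x])\cong P^{c}_{n-2}$. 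Substituting into Lemma~\ref{lemma: truncation} and simplifying yields
\[
\H^{c}_{n}(t) \;=\; (1+t)\,\H^{c}_{n-1}(t) \;+\; (c_n-1)\,t\,\H^{c}_{n-2}(t),
\]
with base cases $\H^{c}_{0}(t)=1$, $\H^{c}_{1}(t)=1+t$, and $\H^{c}_{n}(0)=1$ for all $n$.

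Next I would establish, by induction on $n$, that $\H^{c}_{n}(t)$ is real-rooted with all zeros in $(-\infty,0)$ and that $\H^{c}_{n-1}(t)$ interlaces $\H^{c}_{n}(t)$. The inductive step relies on the following classical sign-change lemma: if real-rooted $p,q$ with positive leading coefficient have all zeros in $(-\infty,0)$ and $p\preceq q$, and if $\alpha\geq 0$, then $h(t) := (1+t)\,q(t) + \alpha\,t\,p(t)$ is real-rooted with zeros in $(-\infty,0)$ and $q\preceq h$. To see this one evaluates $h$ at $t=0$, where $h(0)=q(0)>0$; at the zeros $r_1<\cdots<r_m$ of $q$, where $h(r_i)=\alpha r_i\,p(r_i)$ has sign opposite to $p(r_i)$ since $r_i<0$ and $\alpha\geq 0$, and the values $p(r_i)$ alternate in sign by $p\preceq q$; and as $t\to-\infty$. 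A sign count then produces exactly $\deg h = m+1$ real zeros of $h$, one in $(-\infty,r_1)$, one in each $(r_i,r_{i+1})$, and one in $(r_m,0)$, which is precisely the interlacing $q\preceq h$. Applying this with $q=\H^{c}_{n-1}$, $p=\H^{c}_{n-2}$, and $\alpha=c_n-1\geq 0$ closes the induction and proves Theorem~\ref{maxrank} for $\H_P$.

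To finish Theorem~\ref{maxrank} for $G_P$, I would observe that $\operatorname{aug}(P^{c}_{n})$ is again a maximal ranked poset: it is $P^{c'}_{n+1}$ on the prepended sequence $c'=(1,c_1,\ldots,c_n)$, since the newly adjoined bottom becomes the unique rank-$1$ element and all cross-rank order relations remain present. By Theorem~\ref{thm: G is H aug}, $G_{P^{c}_{n}}(t)=\H_{\operatorname{aug}(P^{c}_{n})}(t)=\H^{c'}_{n+1}(t)$, which is real-rooted by the case of $\H$ already treated. The main obstacle is the sign-change lemma itself: one has to track the signs of $h$ at $0$, at each zero of $q$, and as $t\to-\infty$, and verify that the sign changes account for $\deg h=\deg q+1$ zeros. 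Since the identity $\H^{c}_{n}(0)=1$ keeps all zeros strictly negative along the induction, this bookkeeping never degenerates, and the combinatorial positivity $c_n\geq 1$ makes the hypothesis $\alpha\geq 0$ automatic for every step of the recurrence.
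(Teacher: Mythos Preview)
Your proposal does not address the stated theorem at all. The statement you were asked to prove is the identity
\[
G_P(t) = \H_{\operatorname{aug}(P)}(t)
\]
for an arbitrary bounded graded poset $P$. Your write-up instead proves Theorem~\ref{maxrank}, the real-rootedness of $\H_P$ and $G_P$ for maximal ranked posets. You even invoke Theorem~\ref{thm: G is H aug} as a black box in your final paragraph when deducing the $G_P$ case from the $\H_P$ case, so your argument assumes the very statement it is supposed to establish.

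Note also that the paper itself does not prove Theorem~\ref{thm: G is H aug}; it is quoted from \cite[Corollary~4.6]{ferroni-matherne-vecchi}. A proof would have to work from the definitions: $G_{[x,y]}(t)=\sum_{x\le z\le y}t^{\rho(z)-\rho(x)}\H_{[z,y]}(t)$ on one side, and the recursive or incidence-algebra description of $\H_{\operatorname{aug}(P)}$ on the other. One natural approach is to apply Theorem~\ref{thm: numerical-canonical-decomposition} to $\operatorname{aug}(P)$ and observe that every proper upper interval of $\operatorname{aug}(P)$ is an interval of $P$, then compare the resulting sum to the defining sum for $G_P$. None of this is present in your proposal. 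As a separate remark, your argument for Theorem~\ref{maxrank} is essentially the paper's: the same three-term recurrence from Lemma~\ref{lemma: truncation}, the same interlacing induction, and the same reduction of $G_P$ to $\H_{\operatorname{aug}(P)}$ by noting that augmentation preserves maximal ranked posets; you just spell out the sign-change mechanics where the paper appeals to the cone property of interlacing.
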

Since augmenting a poset preserves Cohen--Macaulayness, augmented Chow polynomials are also conjectured to be real rooted for Cohen--Macaulay posets. The conjecture for matroids can first be found in \cite[Conjecture~4.3.3]{stevens-bachelor} and was settled in the uniform case in \cite[Theorem~1.10]{ferroni-matherne-stevens-vecchi}.

\section{Uniform geometric lattices}
The goal of this section is to prove Theorem \ref{chowth}. We write $f_{n,k}(t) := \H_{\U_{k+1,n}}(t)$. The shift for $k$ is just for convenience as, for example, it ensures that the polynomial $f_{n,k}(t)$ has degree $k$. 

Suppose $f(t), g(t) \in \mathbb{R}[t]$ are real-rooted polynomials with positive leading coefficients, and that 
$$
\cdots \leq \alpha_3 \leq \alpha_2 \leq \alpha_1  \ \  \mbox{ and } \ \ \cdots \leq \beta_3 \leq \beta_2 \leq \beta_1 
$$
are the zeros of $f(t)$ and $g(t)$, respectively. We say that the zeros of $f(t)$ \emph{interlace} those of $g(t)$ if 
$$
 \cdots \leq \alpha_3 \leq \beta_3 \leq \alpha_2 \leq \beta_2 \leq \alpha_1 \leq \beta_1, 
$$
and we write $f(t) \prec g(t)$. 

\newtheorem*{thm:chowth}{Theorem~\ref{chowth}}
\begin{thm:chowth}
       Let $\U_{k,n}$ be the uniform geometric lattice of rank $k$ over $n$ elements. Then all zeros of $\H_{\U_{k,n}}(t)$ are real. 
\end{thm:chowth}
\begin{proof}
We employ Theorem \ref{thm:truncation formula}. In the uniform case the recursion becomes
\begin{equation}\label{rec1}
f_{n,k}(t) = tf_{n,k-1}(t) + D_{n,k}(t),
\end{equation}
where $D_{n,k}(t) = \sum_{j=0}^k \binom{n}{j}d_j(t).$
Observe that in the uniform case the truncation formula could also be deduced directly by rewriting the equation in \cite[Theorem~3.21]{ferroni-matherne-stevens-vecchi}. By exploiting the palindromicity \cite[Proposition~3.6 (ii)]{ferroni-matherne-stevens-vecchi} of the Chow polynomials, we rewrite \eqref{rec1} as

\begin{align}\label{rec2}
    f_{n,k}(t) &= t^kf_{n,k}(t^{-1}) = t^k\left(t^{-1} f_{n,k-1}(t^{-1}) +D_{n,k}(t^{-1}) \right) \nonumber \\
    &= f_{n,k-1}(t) + t^kD_{n,k}(t^{-1}).
\end{align}
By subtracting \eqref{rec2} from \eqref{rec1} we deduce
\[
(t-1)f_{n,k-1}(t) = t^kD_{n,k}(t^{-1}) - D_{n,k}(t).
\]
Let $D$ be the \emph{deranged map}, i.e., the linear map defined by $D(t^j) = d_j(t)$ in \cite[Section~3.2]{branden-solus}. Then $D_{n,k}(t) = D\left(\sum_{j=0}^k\binom{n}{j}t^j\right)$.
Since
\[
\sum_{j=0}^k\binom{n}{j}t^j = \sum_{i=0}^kh_it^i(1+t)^{k-i},
\]
where $h_i = \binom{n-k+i-1}{i} \geq 0$, we can use \cite[Corollary~3.7]{branden-solus} to deduce 
\[
D_{n,k}(t) \prec t^kD_{n,k}(t^{-1}).
\]
Obreshkoff's theorem (see e.g. \cite[Theorem~7.2]{branden}) says that any linear combination of two polynomials whose zeros are interlacing is real-rooted. Hence $t^kD_{n,k}(t^{-1}) - D_{n,k}(t)$ is real-rooted, and then so is $f_{n,k-1}(t)$.
\end{proof}

The analogous theorem for the augmented Chow polynomial was already proved in {\cite[Theorem~1.10]{ferroni-matherne-stevens-vecchi}}. We present an alternative proof in analogy with the proof of Theorem \ref{chowth}.
\begin{theorem}\label{thm: G real rooted}
    All zeros of the augmented Chow polynomial of a uniform geometric lattice $\U_{k,n}$ are real.
\end{theorem}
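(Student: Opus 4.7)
The plan is to mimic the proof of Theorem \ref{chowth} step by step, replacing Chow polynomials and classical derangement polynomials by their augmented analogs. Writing $g_{n,k}(t) := G_{\U_{k+1,n}}(t)$ and using Theorem \ref{thm: G is H aug} to rewrite $g_{n,k}(t) = \H_{\operatorname{aug}(\U_{k+1,n})}(t)$, I would apply the truncation formula (Theorem \ref{thm:truncation formula}) to the poset $\operatorname{aug}(\U_{k+1,n})$. Two structural facts drive the computation: truncation commutes with augmentation, $\tau(\operatorname{aug}(P)) = \operatorname{aug}(\tau(P))$, and every lower interval in $\U_{k+1,n}$ below a rank-$j$ element with $j \leq k$ is the Boolean algebra $\B_j$. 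Setting $\tilde{d}_j(t) := G_{\U_{j-1,j}}(t)$ for the augmented derangement polynomial, this yields a recursion of the same shape as in the non-augmented case:
\[
g_{n,k}(t) = t\, g_{n,k-1}(t) + \tilde{D}_{n,k}(t), \qquad \tilde{D}_{n,k}(t) := 1 + t\sum_{j=1}^k \binom{n}{j}\tilde{d}_j(t).
\]

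Next, since $g_{n,k}(t)$ is palindromic of degree $k+1$, combining the above with its palindromic reversal and subtracting gives
\[
(t-1)\, g_{n,k-1}(t) = t^{k+1}\tilde{D}_{n,k}(t^{-1}) - \tilde{D}_{n,k}(t).
\]
The remaining crux, parallel to the classical case, is to establish the interlacing $\tilde{D}_{n,k}(t) \prec t^{k+1}\tilde{D}_{n,k}(t^{-1})$. With that in hand, Obreshkoff's theorem immediately yields real-rootedness of the right-hand side, and therefore of $g_{n,k-1}(t)$.

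To prove the interlacing, the natural approach is to introduce an augmented deranged map sending $t^j \mapsto \tilde{d}_j(t)$, to expand the underlying generating polynomial $1 + t \sum_{j=1}^k \binom{n}{j} t^j$ in the basis $\{t^i(1+t)^{k+1-i}\}$ with non-negative coefficients, and to invoke an augmented analog of \cite[Corollary~3.7]{branden-solus}. The main obstacle will be establishing this analog: unlike the classical deranged map, its augmented counterpart is not treated in \cite{branden-solus}, so one must either prove the required interlacing statement from scratch---exploiting palindromicity and real-rootedness of $\tilde{d}_j(t)$, both of which are known---or reduce to the classical setting via identities relating $\tilde{d}_j(t)$ to the classical $d_j(t)$ and the Eulerian polynomials $A_j(t)$.
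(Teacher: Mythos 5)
Your setup is sound and follows the same strategy as the paper: applying Theorem~\ref{thm:truncation formula} to $\operatorname{aug}(\U_{k+1,n})$ (using $\tau\circ\operatorname{aug}=\operatorname{aug}\circ\tau$ and the fact that lower intervals are Boolean) does yield the recursion $g_{n,k}(t)=t\,g_{n,k-1}(t)+\tilde{D}_{n,k}(t)$ with $\tilde{d}_j(t)=G_{\U_{j-1,j}}(t)$, the palindromicity manipulation giving $(t-1)g_{n,k-1}(t)=t^{k+1}\tilde{D}_{n,k}(t^{-1})-\tilde{D}_{n,k}(t)$ is correct, and Obreshkoff's theorem would finish the job. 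But the entire weight of the argument rests on the interlacing between $\tilde{D}_{n,k}(t)$ and its reciprocal, which you leave as an acknowledged ``obstacle,'' so the proof is incomplete exactly at its crux. Moreover, the primary route you sketch cannot work: the polynomial $1+t\sum_{j=1}^{k}\binom{n}{j}t^{j}=1+\sum_{j=1}^{k}\binom{n}{j}t^{j+1}$ has constant coefficient $1$ and linear coefficient $0$, and in any expansion $\sum_i h_i\,t^i(1+t)^{k+1-i}$ one has $h_0=1$ and $h_1=0-(k+1)h_0=-(k+1)<0$. So there is no nonnegative expansion in that basis, and no augmented analog of \cite[Corollary~3.7]{branden-solus} can be invoked along these lines.

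The correct resolution is your second fallback, carried out concretely. By Theorem~\ref{thm:truncation and dual truncation} applied to the self-dual poset $\B_j$ (or by \cite[Theorem~5.1]{hameister-rao-simpson} together with \cite[Corollary~3.5]{larson}), one gets $\tilde{d}_j(t)=G_{\U_{j-1,j}}(t)=G_{\tau(\B_j^*)}(t)=\H_{\B_j}(t)=A_j(t)$, so your $\tilde{D}_{n,k}(t)=1+t\sum_{j=1}^{k}\binom{n}{j}A_j(t)$ is precisely the image of $\sum_{j=0}^{k}\binom{n}{j}t^j$ under the Eulerian transformation of \cite{branden-jochemko}. The interlacing between this polynomial and its reciprocal is then exactly \cite[Theorem~1.1]{athanasiadis-eulerian2} on (partial) binomial Eulerian polynomials, which is what the paper cites; it is an off-the-shelf result, not something to be re-derived from a nonnegative $(1+t)$-basis expansion. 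Without this identification and citation (or an equivalent from-scratch interlacing argument), the theorem is not proved.
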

 
\begin{proof}
Let $g_{n,k}(t)$ denote the augmented Chow polynomial of the uniform geometric lattice $\U_{k,n}$. Again, using either \cite[Corollary~3.5]{larson} or \cite[Theorem~3.21]{ferroni-matherne-stevens-vecchi},
\[
g_{n,k}(t) = tg_{n,k-1}(t) + A_{n,k}(t),
\]
where $A_{n,k}(t) = \sum_{j=0}^{k-1}\binom{n}{j}A_j(t)$. Again, as in the proof of Theorem \ref{chowth},
\[
(t-1)g_{n,k-1}(t) = t^kA_{n,k}(t^{-1}) - A_{n,k}(t).
\]
Let $\mathcal{A}^\circ$ be the Eulerian transformation as defined in  \cite{branden-jochemko}, $\mathcal{A}^\circ(t^j) = A_j(t)$. Then  
\[
A_{n,k}(t) = \mathcal{A}^\circ\left( \sum_{j=0}^{k-1}\binom{n}{j}t^j \right).
\]
By \cite[Theorem~1.1]{athanasiadis-eulerian2}, we know that $\A_{n,k}(t)$ is real-rooted and
\[
t^kA_{n,k}(t^{-1}) \prec A_{n,k}(t).
\]
We conclude the real-rootedness of $g_{n,k-1}(t)$ as in the proof of Theorem \ref{chowth}.
\end{proof}

\section{Dual truncation}
We now investigate how Chow polynomials behave under the operation of dual truncation. Notice that this operation would not make sense on the class of geometric lattices, as the dual truncation of a geometric lattice is not necessarily a geometric lattice.

\begin{theorem}\label{thm: dual truncation}
Let $P$ be a bounded and graded poset. Then
    \[
    \H_P(t) = 1 + t\sum_{2\leq \rho(x)\leq \rho(P) - 1}\H_{[x,\widehat{1}]}(t) + t\H_{\sigma(P)}(t).
    \]
    Alternatively,
    \[
    \H_P(t) = (t+1)\H_{\sigma(P)}(t) - t\H_{\sigma^2(P)}(t) + t\sum_{\rho(x) = 2}\H_{[x,1]}(t).
    \]
\end{theorem}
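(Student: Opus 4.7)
The plan is to derive the first identity from the numerical canonical decomposition (Theorem~\ref{thm: numerical-canonical-decomposition}), which is already expressed in terms of upper intervals $[x,\widehat{1}]$ and is therefore naturally adapted to the operation $\sigma$ of removing atoms. The second identity will then follow by a symbolic manipulation from the first.

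First I would apply Theorem~\ref{thm: numerical-canonical-decomposition} to both $P$ and $\sigma(P)$. Two observations reduce the two expressions to a common shape. First, the coefficient $\frac{t^{\rho(x)-1}-1}{t-1}$ vanishes when $\rho(x)=1$, so the atoms of $P$ contribute nothing to the sum appearing in the decomposition of $\H_P(t)$. Second, the poset $\sigma(P)$ consists of $\widehat{0}$ together with the elements of $P$ of rank at least $2$, with ranks shifted by $1$ and with $\rho(\sigma(P))=\rho(P)-1$; crucially, the upper interval $[x,\widehat{1}]_{\sigma(P)}$ coincides with $[x,\widehat{1}]_P$ for every $x$ with $\rho_P(x)\geq 2$, so the Chow polynomials of upper intervals are unchanged.

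With these remarks in place I would compute $\H_P(t)-t\,\H_{\sigma(P)}(t)$ term by term. The constant contribution reduces to
\[
\frac{t^{\rho(P)}-1}{t-1} - t\cdot\frac{t^{\rho(P)-1}-1}{t-1}=1,
\]
and for every $x$ with $2\leq \rho(x)\leq \rho(P)-1$ the coefficient of $\H_{[x,\widehat{1}]}(t)$ simplifies to
\[
\frac{t^{\rho(x)-1}-1}{t-1} - t\cdot\frac{t^{\rho(x)-2}-1}{t-1}=1.
\]
Assembling these simplifications yields the first identity.

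For the second identity I would apply the first identity to the poset $\sigma(P)$ itself, rewriting the sum over elements of $\sigma(P)$ of rank at least $2$ as a sum over $x\in P$ with $3\leq \rho(x)\leq \rho(P)-1$ (using again that intervals and Chow polynomials are preserved). Combining this with the first identity applied to $P$ and isolating the rank-$2$ contribution gives the claimed formula. I do not expect a serious obstacle here: the only subtlety is the bookkeeping around the rank shift when passing from $P$ to $\sigma(P)$ and the verification that the coefficient identities above hold cleanly for all relevant ranks.
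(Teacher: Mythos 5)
Your proposal is correct and follows essentially the same route as the paper: both apply Theorem~\ref{thm: numerical-canonical-decomposition} to $P$ and $\sigma(P)$ and take differences, and both derive the second identity by applying the first identity to $\sigma(P)$ as well. The only (cosmetic) difference is that you subtract $t\,\H_{\sigma(P)}(t)$ rather than $\H_{\sigma(P)}(t)$, which makes the telescoping immediate and lets you skip the paper's concluding appeal to palindromicity.
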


 \begin{proof}
     To prove the first statement we use Theorem \ref{thm: numerical-canonical-decomposition} on $\H_{P}(t)$ and $\H_{\sigma(P)}(t)$. Taking the difference of the two equations implies that
     \[
     \H_P(t) - \H_{\sigma(P)}(t) = t^{\rho(P)-1} + t\sum_{2\leq\rho(x)\leq \rho(P)-1}t^{\rho(x)-2}\H_{[x,\widehat{1}]}(t).
     \]
     The result then follows directly by palindromicity. The second statement follows by applying the first to both $\H_P(t)$ and $\H_{\sigma(P)}(t)$ and again taking differences. 
 \end{proof}

The following result lets us compute explicitly the augmented Chow polynomial of a dual truncation.

\newtheorem*{truncation and dual truncation}{Theorem \ref{thm:truncation and dual truncation}}
\begin{truncation and dual truncation}
    Let $P$ be a bounded and graded poset. Then
    \[
    \H_P(t) = G_{\sigma(P)}(t) = G_{\tau(P^*)}(t).
    \]
\end{truncation and dual truncation}
\begin{proof}
    Recall that, by Theorem \ref{thm: G is H aug}, $G_Q = \H_{\operatorname{aug}(Q)}$. We prove the first equality using Theorem \ref{thm: dual truncation} on $\operatorname{aug}\sigma(P)$. This tells us that
    \[
    \H_{\operatorname{aug}\sigma(P)}(t) = (1+t) \H_{\sigma(P)}(t) - t\H_{\sigma^2(P)}(t) + t\sum_{\rho(x) = 2}\H_{[x,\widehat{1}]}(t),
    \]
    where we used  $\sigma\left( \operatorname{aug}\sigma(P) \right) = \sigma(P)$. The equality follows immediately from Theorem \ref{thm: dual truncation}. We prove the second equality by induction on the rank $r$ of $P$. If $r\leq 2$ there is nothing to prove. If $r\geq 3$, then we use Theorem \ref{thm: dual truncation} to write
    \begin{align*}
        \H_{\operatorname{aug}(\tau(P^*))}(t) &= (1+t)\H_{\tau(P^*)}(t) - t \H_{\sigma(\tau(P^*))}(t) + t \sum_{\rho(x) = r-1} \H_{\tau([x,\widehat{0}]^*)}(t) \\
        &= (1+t)\H_{\operatorname{aug}(\tau (\sigma(P)))}(t) - t \H_{\operatorname{aug}(\tau^2(\sigma(P)))}(t) + t \sum_{\rho(x) = r-1} \H_{\operatorname{aug}(\tau(\sigma[\widehat{0},x]))}(t) \\
        &= \H_{\operatorname{aug}(\sigma(P))}(t), 
    \end{align*}
    where in the second equality we apply the inductive hypothesis since each of those posets are of lower rank, and in the third equality we use Lemma \ref{lemma: truncation} and the fact that the truncation commutes with the augmentation, when $r\geq 2$.
\end{proof}

This result lets us conclude an even stronger result regarding the augmented Chow polynomial of the dual of a poset.\footnote{We thank Nicholas Proudfoot for pointing this to us.}

\begin{corollary}\label{cor:G dual P}
    Let $P$ be a bounded and graded poset. Then,
    \[
    G_P(t) = G_{P^*}(t).
    \]
\end{corollary}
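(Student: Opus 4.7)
The plan is to reduce the statement to Theorem \ref{thm:truncation and dual truncation} by applying it to the augmented poset $\operatorname{aug}(P)$. First, I would invoke Theorem \ref{thm: G is H aug} to rewrite $G_P(t) = \H_{\operatorname{aug}(P)}(t)$, and then apply Theorem \ref{thm:truncation and dual truncation} to the bounded and graded poset $\operatorname{aug}(P)$, yielding
\[
G_P(t) = \H_{\operatorname{aug}(P)}(t) = G_{\tau(\operatorname{aug}(P)^*)}(t).
\]
So the corollary reduces to the poset-level identification $\tau(\operatorname{aug}(P)^*) \cong P^*$.

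To establish this isomorphism I would combine two elementary observations. The first is a general duality between the two truncation operations: for every bounded and graded poset $Q$, one has $\tau(Q^*) \cong \sigma(Q)^*$, because under the order-reversing bijection $Q \leftrightarrow Q^*$ the coatoms of $Q^*$ are exactly the atoms of $Q$. Applied with $Q = \operatorname{aug}(P)$, this reduces the problem to showing $\sigma(\operatorname{aug}(P)) \cong P$. The second observation is that $\operatorname{aug}(P)$ has a \emph{unique} atom, namely the original minimum $\hat{0}_P$ (now sitting one rank above the newly adjoined minimum), so removing it via $\sigma$ leaves the new minimum to play the role of $\hat{0}_P$, thereby recovering $P$ as a bounded graded poset.

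Chaining the identifications gives $\tau(\operatorname{aug}(P)^*) \cong \sigma(\operatorname{aug}(P))^* \cong P^*$, hence $G_P(t) = G_{P^*}(t)$. I do not expect any serious obstacle: the whole argument is a one-line application of Theorem \ref{thm:truncation and dual truncation} once the augmentation trick is in place. The only minor care needed is in checking that ranks are preserved under $\sigma \circ \operatorname{aug}$, which is immediate because, in $\sigma(\operatorname{aug}(P))$, the longest chain from the adjoined minimum to any $x \in P \setminus \{\hat{0}_P\}$ has length $\rho_P(x)$.
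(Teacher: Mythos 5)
Your proposal is correct and follows essentially the same route as the paper: both set $Q=\operatorname{aug}(P)$, observe $\sigma(Q)=P$ and $P^*=\sigma(Q)^*=\tau(Q^*)$, and then apply Theorem~\ref{thm:truncation and dual truncation} to $Q$ to chain $G_P(t)=G_{\sigma(Q)}(t)=G_{\tau(Q^*)}(t)=G_{P^*}(t)$. The extra verifications you spell out (uniqueness of the atom of $\operatorname{aug}(P)$, preservation of ranks) are correct and left implicit in the paper.
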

\begin{proof}
    Let $Q = \operatorname{aug}(P)$. This means that $\sigma(Q) = P$ and
    \[
    P^* = \sigma(Q)^* = \tau(Q^*).
    \]
    By Theorem \ref{thm:truncation and dual truncation}, 
    \[
    G_{P^*}(t) = G_{\tau(Q^*)}(t) = G_{\sigma(Q)}(t) = G_P(t).
    \]
\end{proof}

We are now ready to prove Corollary \ref{cor:dual uniform}.

\newtheorem*{cor:dualuniform}{Corollary~\ref{cor:dual uniform}}
\begin{cor:dualuniform}
    Let $\U_{k,n}$ be the uniform geometric lattice of rank $k$ over $n$ elements. Then, $\H_{(\U_{k,n})^*}(t)$ and $G_{(\U_{k,n})^*}(t)$ have only real zeros.
\end{cor:dualuniform}
\begin{proof}
    By Theorem \ref{thm:truncation and dual truncation}, $\H_{(\U_{k,n})^*}(t) = G_{\U_{k-1,n}}(t)$, which is real-rooted by Theorem \ref{thm: G real rooted}. Moreover, by Corollary \ref{cor:G dual P}, $G_{(\U_{k,n})^*}(t) = G_{\U_{k,n}}(t)$, hence this polynomial is also real-rooted. 
\end{proof}

\section{Maximal ranked posets}
In this section we prove Theorem \ref{maxrank}, which establishes real-rootedness for the Chow polynomials of maximal ranked posets. One nice feature of this class of posets is that the lower ideal generated by an element of rank $k+1$ in $P^c_{n}$ is isomorphic to $P^c_{k}$ and the truncation of $P^c_{n}$ is isomorphic to $P^c_{n-1}$. Using this, we can specialize Lemma \ref{lemma: truncation} and compute $f_{n}(t) := \H_{P^c_{n}}(t)$ as
\[
f_n(t) = (1+t)f_{n-1}(t) + t(c_n-1)f_{n-2}(t), \ \ n \geq 2,
\]
with the initial conditions $f_0(t)=0$ and $f_1(t)=1$. Observe that, as defined, $P^c_{n}$ has rank $n+1$ and therefore $\deg f_n = n$.
\newtheorem*{thm:maxrank}{Theorem~\ref{maxrank}}
\begin{thm:maxrank}
    Let $P$ be a maximal ranked poset. Then $\H_P(t)$ and $G_P(t)$ have only real zeros.
\end{thm:maxrank}

\begin{proof}
We use the notation established above and show by induction on $n$ that ${f_n(t)\prec f_{n+1}(t)}$ for every $n \geq 1$. Since $f_1(t)=1+t$ and $f_2(t)= 1+(1+c_3)t + t^2$, where $c_3\geq 1$, the case when $n=1$ follows. Assume 
$f_{n-1}(t) \prec f_n(t)$, where $n \geq 2$. Then all zeros of $f_{n-1}(t)$ and $f_n(t)$ are nonpositive since the coefficients are nonnegative. Hence $f_n(t) \prec  t f_{n-1}(t)$ and $f_n(t) \prec (1+t) f_n(t)$.  Then 
$$
f_n(t) \prec (1+t) f_n(t) + (c_{n+1}-1) t f_{n-1}(t)=f_{n+1}(t),
$$
since the set of polynomials that are interlaced by a specific polynomial is a convex cone, see e.g. \cite[Lemma 2.6]{borcea-branden-london}. 
The result for $G_P(t)$ follows from observing that if $P$ is a maximal ranked poset, then so is $\operatorname{aug}(P)$.
\end{proof}

\section*{Acknowledgements}
The authors wish to thank Nicholas Proudfoot for his comments.

\bibliographystyle{amsalpha}
\bibliography{bibliography}

\end{document}